\documentclass[11pt]{article}
\usepackage[margin=1.1in,a4paper]{geometry}
\usepackage[dvipsnames]{xcolor}
\usepackage[authoryear,longnamesfirst]{natbib}
\usepackage{amsmath,amssymb,mathtools,amsthm}
\usepackage{amsfonts,bm}
\usepackage{url}
\usepackage[colorlinks=true,citecolor=blue,linkcolor=blue,urlcolor=blue]{hyperref}

\newtheorem{theorem}{Theorem}
\newtheorem{lemma}{Lemma}
\newtheorem{corollary}{Corollary}
\newtheorem{remark}{Remark}

\DeclareMathOperator{\per}{per}
\newcommand{\one}{\mathbf{1}}

\title{Proof of Dittert's conjecture for dimensions \texorpdfstring{\(n\ge 17\)}{n >= 17}}
\author{Zhekai Pang\thanks{Universitat Pompeu Fabra, Barcelona, Spain. Email: \texttt{zhekai.pang@upf.edu}.}}
\date{}

\begin{document}

\maketitle

\begin{abstract}
Dittert's conjecture gives a sharp upper bound for the Dittert functional on nonnegative matrices whose entries sum to \(n\). It extends the van der Waerden permanent problem from the doubly stochastic polytope to a larger simplex in which row and column sums are allowed to vary. We prove the conjecture for every dimension \(n\ge 17\). The proof combines the Knopp--Sinkhorn lower bound for boundary points of the doubly stochastic polytope with a refined scaling step in the Cheon--Wanless method. The main improvement is a sharper subset-sum estimate for the row and column sums of a near maximizer, which reduces the scalar dilation needed to obtain a doubly superstochastic matrix. This strengthened comparison is sufficient to exclude boundary maximizers in all dimensions \(n\ge 17\), and the known positive-support characterization then identifies the unique maximizer as \(n^{-1}J_n\).
\end{abstract}

\medskip
\noindent\textbf{Keywords:} Permanent; Doubly stochastic matrix; Dittert conjecture; Birkhoff polytope.

\medskip
\noindent\textbf{2020 MSC:} 15A15; 15A45.

\section{Introduction}

Extremal problems for permanents form a classical part of matrix theory and combinatorics. The van der Waerden conjecture, proved independently by Egorychev and Falikman, identifies the minimum permanent on the doubly stochastic polytope and remains one of the central examples of a sharp permanent inequality \citep{Egorychev1981,Falikman1981}. Dittert's conjecture asks for a related extremal statement on the larger simplex of nonnegative matrices whose entries have fixed sum. The problem was recorded by Minc as Conjecture 28 in his survey on permanents \citep[Conjecture~28]{Minc1983}, and later by Zhan as Conjecture 14 in his list of open problems in matrix theory \citep[Conjecture~14]{Zhan2007}.

The existing results show that the main difficulty is the boundary of the simplex. Sinkhorn proved the case \(n=2\), and Hwang proved the case \(n=3\) \citep{Sinkhorn1984,Hwang1987}. Hwang also proved that the only possible positive maximizing matrix is \(n^{-1}J_n\) \citep{Hwang1986}. Cheon and Wanless later proved a quantitative estimate for near maximizers \citep[Theorem~2.1]{CheonWanless2012}, and two boundary exclusions: partly decomposable matrices cannot maximize the Dittert functional \citep[Theorem~3.2]{CheonWanless2012}, and zeroes forming a single block cannot occur in a maximizing matrix \citep[Theorem~3.7]{CheonWanless2012}. Udayan and Somasundaram prove that every \(\phi\)-maximizing matrix on \(K_4\) is fully indecomposable \citep[Theorem~3.1]{UdayanSomasundaram2024}.

This paper proves Dittert's conjecture for every dimension \(n\ge 17\). The new ingredient is a sharper scaling step. Cheon and Wanless showed that a near maximizer becomes doubly superstochastic after a scalar dilation. We improve the dilation factor by first proving a stronger subset-sum estimate for the row and column sums. This reduces the comparison needed against the Knopp--Sinkhorn boundary gap and makes the argument work from dimension seventeen onward.

\section{Preliminary}

Let \(S_n\) be the symmetric group on \(\{1,\ldots,n\}\). For an \(n\times n\) matrix \(A=(a_{ij})\), define
\[
 \per(A)=\sum_{\sigma\in S_n}\prod_{i=1}^{n} a_{i,\sigma(i)} .
\]
Let \(K_n\) be the set of all real nonnegative \(n\times n\) matrices whose entries have sum \(n\). If \(A\in K_n\), let \(r_i=\sum_{j=1}^{n}a_{ij}\) and \(c_j=\sum_{i=1}^{n}a_{ij}\). The Dittert function is
\[
 \phi(A)=\prod_{i=1}^{n} r_i+\prod_{j=1}^{n} c_j-\per(A).
\]
Let \(J_n=\one\one^T\). Dittert's conjecture asserts that \(\phi(A)\le 2-n!/n^n\) for every \(A\in K_n\), with equality only at \(n^{-1}J_n\). This is recorded in Zhan's survey \citep[Conjecture~14]{Zhan2007}.

Let \(\gamma_n=n!/n^n\). Then \(\phi(n^{-1}J_n)=2-\gamma_n\). Let \(\Omega_n\) be the set of all \(n\times n\) doubly stochastic matrices. A nonnegative matrix \(M\) is called doubly superstochastic if there exists a matrix \(B\in\Omega_n\) such that \(B\le M\) entrywise. We use the van der Waerden--Egorychev--Falikman theorem in the form stated by Cheon and Wanless \citep[Theorem~1.1]{CheonWanless2012}: if \(B\in\Omega_n\), then \(\per(B)\ge\gamma_n\), with equality only for \(B=n^{-1}J_n\). Define
\[
 m_n=(n-2)!\left(\frac{n-2}{(n-1)^2}\right)^{n-2}.
\]

\begin{lemma}\label{lem:KS}
Let \(n>3\). If \(B\in\Omega_n\) has at least one zero entry, then \(\per(B)\ge m_n\).
\end{lemma}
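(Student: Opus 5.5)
Permuting rows and columns does not change the permanent and keeps \(B\) in \(\Omega_n\), so we may assume the zero entry is \(b_{11}=0\). It then suffices to minimize \(\per\) over the face \(F=\{B\in\Omega_n: b_{11}=0\}\) and to show that this minimum is exactly \(m_n\). This is the Knopp--Sinkhorn theorem. The plan is to identify the minimizer explicitly as the matrix \(B^\ast\) with
\[
b^\ast_{11}=0,\qquad b^\ast_{1j}=b^\ast_{j1}=\tfrac{1}{n-1}\ (j\ge 2),\qquad b^\ast_{ij}=\tfrac{n-2}{(n-1)^2}\ (i,j\ge 2).
\]
A direct check shows \(B^\ast\in\Omega_n\): the first row and column sum to \(1\), and each other row sums to \(\tfrac1{n-1}+(n-1)\tfrac{n-2}{(n-1)^2}=1\).

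Its permanent is computed by expanding along the first row and the first column. Every term uses some \(b^\ast_{1j}\) and some \(b^\ast_{k1}\) with \(j,k\ge 2\), multiplied by the permanent of an \((n-2)\times(n-2)\) constant matrix with entries \(c=\tfrac{n-2}{(n-1)^2}\). Summing over the \((n-1)^2\) pairs \((j,k)\) gives
\[
\per(B^\ast)=(n-1)^2\cdot\frac{1}{(n-1)^2}\cdot(n-2)!\,c^{n-2}=m_n .
\]
So \(m_n\) is the exact minimum, and the lemma follows once \(B^\ast\) is shown to minimize \(\per\) on \(F\).

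For the minimization, \(F\) is compact and nonempty for \(n\ge 2\), so a minimizer \(B\) exists. I would then follow the Marcus--Newman/London strategy used in the proof of the van der Waerden theorem.

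\emph{Step 1: support.} Show that \(B\) has no zero entries apart from \(b_{11}\). Suppose \(b_{pq}=0\) with \((p,q)\ne(1,1)\). Perturb \(B\) along a direction that moves mass onto \((p,q)\) while staying in \(F\); for example, use a combination of permutation-matrix differences avoiding position \((1,1)\), which exist since \(n>3\). Then show the first-order change of \(\per\) is negative unless a partial-decomposability degeneracy occurs, and treat that degeneracy by induction on the face dimension.

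\emph{Step 2: Lagrange conditions.} With positive support off \((1,1)\), the minimality gives multipliers \(\alpha_i,\beta_j\) with
\[
\per(B(i|j))=\alpha_i+\beta_j\quad\text{for all }(i,j)\ne(1,1).
\]
Summing against the row and column constraints yields \(\alpha_i+\beta_j=\per(B)\) up to normalization. Next, replace row \(i\) by the average of rows \(i\) and \(k\) for \(i,k\ge 2\), and do the same for columns. Multilinearity of the permanent, together with the fact that \(B\) stays a minimizer, forces rows \(2,\dots,n\) to coincide and columns \(2,\dots,n\) to coincide, exactly as in Egorychev's averaging argument. Combined with \(b_{11}=0\) and the double-stochasticity constraints, this pins \(B\) down to \(B^\ast\).

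I expect Step 1 to be the main obstacle. A minimizer on a face with a prescribed zero could a priori have further zeros, and the averaging argument needs positivity, or at least full indecomposability of \(B(1|1)\), to propagate the equalities \(\per(B(i|j))=\per(B)\). If that step resists, my fallback is to cite Knopp and Sinkhorn's result directly, since the statement is precisely their theorem. The only genuinely new verification would then be the computation \(\per(B^\ast)=m_n\) above.
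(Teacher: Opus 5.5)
Your fallback is the paper's proof. The paper proves this lemma by a one-line appeal to the corollary after Theorem~5 of Knopp--Sinkhorn, which gives the boundary minimum as $\per(T_n)=m_n$ for $n>3$. Your $B^\ast$ is that extremal matrix, and your expansion $\per(B^\ast)=(n-2)!\,c^{n-2}=m_n$ is correct; it makes explicit the value the paper takes from the reference. With the citation, your argument is complete.

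The self-contained route in Steps~1--2 is not a proof, and you should not present it as one. The gap is Step~1, which carries essentially all the difficulty: it is the analogue of excluding boundary minimizers in the van der Waerden problem. Your first-order sketch cannot work as stated, because nothing in it uses $n>3$ in a real way, and for $n=3$ its conclusion fails. The matrices
\[
\begin{pmatrix}0&\tfrac12&\tfrac12\\ \tfrac12&x&\tfrac12-x\\ \tfrac12&\tfrac12-x&x\end{pmatrix},\qquad 0\le x\le\tfrac12,
\]
all lie in the face $\{b_{11}=0\}$ and all have permanent $\tfrac14=m_3$, which is the minimum on that face for $n=3$. At $x=0$ this is $\tfrac12(J_3-I_3)$. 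It is a fully indecomposable minimizer with zeros at $(2,2)$ and $(3,3)$. Moving mass onto those positions along this segment leaves the permanent unchanged, so the first-order change is zero, not negative. Any proof of Step~1 must use $n\ge4$ genuinely.

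Step~2 is misdescribed but repairable. Multilinearity together with $\per(B(i|j))=\per(B)$ for $(i,j)\ne(1,1)$ shows that averaging two rows $i,k\ge2$ gives another minimizer; this is the Marcus--Newman observation. It does not force the rows of $B$ itself to coincide. That conclusion would need the equality case of Alexandrov--Fenchel, whose standard form requires the remaining rows to be positive, and that fails here because of $b_{11}=0$. For the inequality you do not need it. Repeatedly averaging rows $2,\dots,n$, then columns $2,\dots,n$, keeps you among minimizers positive off $(1,1)$, and in the limit you reach $B^\ast$. So $\min\per=m_n$ on the face follows once Step~1 is established.
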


\begin{proof}
The matrices in \(\Omega_n\) with at least one zero entry are the boundary points of the doubly stochastic polytope. For \(n>3\), the corollary following \citet[Theorem~5]{KnoppSinkhorn1982} shows that the minimum of the permanent on this boundary is \(\per(T_n)=m_n\). The assertion follows.
\end{proof}

\begin{lemma}\label{lem:pinsker}
Let \(0<p,q<1\). Then
\[
 p\log \frac{p}{q}+(1-p)\log\frac{1-p}{1-q}\ge 2(p-q)^2.
\]
\end{lemma}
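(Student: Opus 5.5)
We prove the inequality as a statement about a single function of one variable. Fix \(p\in(0,1)\) and regard the left-hand side minus the right-hand side as a function of \(q\in(0,1)\):
\[
 g(q)=p\log\frac{p}{q}+(1-p)\log\frac{1-p}{1-q}-2(p-q)^2 .
\]
Clearly \(g(p)=0\). The plan is to show that \(g\) is nonincreasing on \((0,p]\) and nondecreasing on \([p,1)\). Then \(q=p\) is a global minimizer, so \(g\ge 0\) on \((0,1)\), which is the claim.

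For this, compute the derivative:
\[
 g'(q)=-\frac{p}{q}+\frac{1-p}{1-q}+4(p-q)=\frac{q-p}{q(1-q)}-4(q-p)=(q-p)\left(\frac{1}{q(1-q)}-4\right).
\]
The key observation is that \(q(1-q)\le \tfrac14\) for every \(q\in(0,1)\), by AM--GM or by completing the square, \(\tfrac14-q(1-q)=(q-\tfrac12)^2\ge0\). Hence the factor \(\frac{1}{q(1-q)}-4\) is nonnegative on \((0,1)\). Consequently \(g'(q)\) has the same sign as \(q-p\): it is \(\le 0\) for \(q<p\) and \(\ge 0\) for \(q>p\). This gives exactly the claimed monotonicity, and the lemma follows.

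None of the steps is a real obstacle. The only point needing care is the algebraic simplification of \(-\frac{p}{q}+\frac{1-p}{1-q}\) into \(\frac{q-p}{q(1-q)}\), which is verified by putting both terms over the common denominator \(q(1-q)\). Equivalently, one could integrate: \(g(q)=\int_p^q (t-p)\bigl(\frac{1}{t(1-t)}-4\bigr)\,dt\ge 0\), since the integrand has the sign of \(t-p\), matching the orientation of the interval. This is the standard proof of the binary Pinsker inequality, where the bound is stated with the natural logarithm as here.
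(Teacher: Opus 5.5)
Your proof is correct and is essentially the paper's argument: both fix \(p\), compute the \(q\)-derivative \(\frac{q-p}{q(1-q)}\) and use \(q(1-q)\le\frac14\). The paper integrates this derivative from \(p\) to \(q\) against the lower bound \(4(x-p)\). You phrase it as a sign/monotonicity argument for \(g\), and you note the integral form yourself, so the two are the same route.
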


\begin{proof}
The case \(q=p\) is trivial. Assume first that \(q>p\). For fixed \(p\), let
\[
 D(q)=p\log \frac{p}{q}+(1-p)\log\frac{1-p}{1-q}.
\]
Then
\[
 D'(q)=-\frac{p}{q}+\frac{1-p}{1-q}=\frac{q-p}{q(1-q)}.
\]
Since \(D(p)=0\), we have
\[
 D(q)=D(q)-D(p)=\int_{p}^{q}D'(x)\,\mathrm{d}x
      =\int_{p}^{q}\frac{x-p}{x(1-x)}\,\mathrm{d}x.
\]
Since \(x(1-x)\le 1/4\) on \((0,1)\), it follows that
\[
 D(q)\ge \int_{p}^{q}4(x-p)\,\mathrm{d}x=2(q-p)^2.
\]
If \(q<p\), then
\[
 D(q)=D(q)-D(p)=-\int_{q}^{p}D'(x)\,\mathrm{d}x
      =\int_{q}^{p}\frac{p-x}{x(1-x)}\,\mathrm{d}x.
\]
The same estimate gives \(D(q)\ge \int_{q}^{p}4(p-x)\,\mathrm{d}x=2(p-q)^2\). This proves the lemma.
\end{proof}

\begin{lemma}\label{lem:balanced}
Let \(A\in K_n\) satisfy \(\phi(A)\ge\phi(n^{-1}J_n)\), and let \(\delta=\gamma_n-\per(A)\). Assume \(\delta\le1/4\). Let \(k\) be an integer with \(0\le k\le n\). For every row index set \(\alpha\subseteq\{1,\ldots,n\}\) with \(|\alpha|=k\),
\begin{equation}\label{eq:balanced-row}
 \left(\sum_{i\in\alpha} r_i-k\right)^2\le \frac{3\delta k(n-k)}{n}.
\end{equation}
The same estimate holds for column sums.
\end{lemma}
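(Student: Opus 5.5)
The plan is to turn the hypothesis \(\phi(A)\ge 2-\gamma_n\) into a lower bound on \(\prod_i r_i\), and then compare that with an AM--GM bound that sees only the subset sum \(s=\sum_{i\in\alpha}r_i\). The comparison is a Kullback--Leibler divergence, which we bound from below.

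\textbf{Step 1.} Since \(\phi(A)\ge 2-\gamma_n\) and \(\delta=\gamma_n-\per(A)\), we get
\[
\prod_i r_i+\prod_j c_j\ge 2-\gamma_n+\per(A)=2-\delta .
\]
The \(r_i\) and the \(c_j\) each sum to \(n\), so AM--GM gives \(\prod_i r_i\le 1\) and \(\prod_j c_j\le 1\). Hence each product is at least \(1-\delta>0\); in particular all \(r_i,c_j\) are positive. Set \(L=-\log(1-\delta)\). Since \(\delta\le 1/4\), convexity of \(-\log(1-x)\) gives \(L\le 4\log(4/3)\,\delta<1.151\,\delta\).

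\textbf{Step 2.} Apply AM--GM separately on \(\alpha\) and on its complement:
\[
\prod_i r_i\le (s/k)^k\bigl((n-s)/(n-k)\bigr)^{n-k}.
\]
Put \(p=k/n\) and \(q=s/n\). Taking logarithms, this reads \(-L\le -nD(p,q)\), where \(D(p,q)\) is the expression in Lemma~\ref{lem:pinsker}; so \(D(p,q)\le L/n\). The cases \(k=0\) and \(k=n\) are trivial, and then \(0<q<1\).

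We may assume \(k\le n/2\). Passing to the complement of \(\alpha\) replaces \(k\) by \(n-k\) and \(s-k\) by \(-(s-k)\), which leaves both sides of \eqref{eq:balanced-row} unchanged. The target is equivalent to
\[
(q-p)^2\le \frac{3\delta\,p(1-p)}{n}.
\]
We use the integral representation \(D=\int (x-p)/(x(1-x))\,\mathrm dx\) from the proof of Lemma~\ref{lem:pinsker}, but with a sharper bound on \(x(1-x)\) than \(1/4\).

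\textbf{Case \(q\le p\).} On \([q,p]\subseteq(0,1/2]\) we have \(x(1-x)\le p(1-p)\). This gives \(D\ge (p-q)^2/(2p(1-p))\), hence
\[
(p-q)^2\le \frac{2p(1-p)L}{n}\le \frac{2.31\,\delta\, p(1-p)}{n},
\]
which is within the required constant \(3\).

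\textbf{Case \(q>p\).} This is the main obstacle: \(x\) may run past \(1/2\) or far above \(p\) (for instance \(k=1\), where \(p=1/n\) is tiny), so \(x(1-x)\) is not controlled by \(p(1-p)\). The plan is a monotonicity argument. Since \(D(p,\cdot)\) is increasing for \(q>p\), it suffices to show \(D(p,q_0)>L/n\) at
\[
q_0=p+\sqrt{3\delta p(1-p)/n}.
\]
Write \(t=q_0-p\). First, the Pinsker bound \(2t^2\le L/n\) of Lemma~\ref{lem:pinsker} is far from sufficient by itself, but it shows that \(q\le p+\sqrt{L/(2n)}\) is small whenever \(p\) is not tiny. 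Next, the bound \(x(1-x)\le x\) gives \(D\ge t-p\log(1+t/p)\). Equivalently, the Bernstein-type bound \(D\ge t^2/(2(p+t/3))\) holds, obtained from \((x-p)/x\) and \(\log(1+u)\le u-u^2/(2(1+u/3))\).

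With \(t^2=3\delta p(1-p)/n\), we need
\[
\frac{3\delta p(1-p)}{2(p+t/3)}>1.151\,\delta ,
\]
that is, roughly \(p+t/3<1.3\,p(1-p)\). This holds when \(t\) is small compared with \(p\). In the regime where \(t\gtrsim p\) (small \(k\)), I would instead use \(D\ge t-p\log(1+t/p)\) directly, since it grows roughly like \(t\) and so beats \(L/n\). The uniform verification across these regimes, using \(k\le n/2\), \(\delta\le1/4\), \(k\ge1\), is where I expect the real work. If some intermediate range resists, I would split at \(t=p\) and bound each piece by elementary calculus.

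The column statement follows identically from \(\prod_j c_j\ge1-\delta\).
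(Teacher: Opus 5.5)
Your Step~1, the reduction to $D(p,q)\le L/n$, the complement symmetry giving $k\le n/2$, and the case $q\le p$ are all correct. The gap is the case $q>p$, and more careful calculus cannot close it if the only inputs are $\prod_i r_i\ge 1-\delta$, $\delta\le 1/4$ and $1\le k\le n/2$.

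First, your Bernstein-type inequality is false. We have $u-\log(1+u)=\frac{u^2}{2}-\frac{u^3}{3}+O(u^4)$, whereas $\frac{u^2}{2(1+u/3)}=\frac{u^2}{2}-\frac{u^3}{6}+O(u^4)$. So $\log(1+u)\le u-\frac{u^2}{2(1+u/3)}$ fails for small $u>0$. That form of Bernstein's bound holds for $(1+u)\log(1+u)-u$; for $u-\log(1+u)$ the denominator must be $2(1+2u/3)$.

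Second, and decisively, even the exact inequality $D(p,q_0)>L/n$ is false in your parameter range. Take $k=1$, $n=100$, $\delta=1/4$. Then $\varepsilon_0=n(q_0-p)=\sqrt{3\delta(n-1)/n}\approx0.862$ and $nD(p,q_0)=-\log(1+\varepsilon_0)-(n-1)\log\bigl(1-\varepsilon_0/(n-1)\bigr)\approx0.244<\log(4/3)=L$. Concretely, the row sums $r_1=1.9$, $r_i=1-0.9/99$ ($i\ge2$) satisfy $\prod_i r_i\approx0.769\ge 1-\delta$, yet $(r_1-1)^2=0.81>3\delta(n-1)/n\approx0.743$. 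The false bound would have given $\approx0.288$, just above $L$, which is why the plan looked viable. Also, the regime $t\gtrsim p$ you want to treat separately never occurs, since $t/p=\sqrt{3\delta(1-p)/k}<1$. The real obstruction is small $k$ with $\delta$ not small.

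So an extra input is needed. The paper takes it from Cheon--Wanless, Theorem~2.1, applied to both $\alpha$ and $\alpha^c$, which gives $\varepsilon^2\le2\delta m$ with $m=\min\{k,n-k\}$. This alone gives the claim when $m\le n/3$, since then $3(n-m)/n\ge2$. For $m>n/3$ the plain Pinsker estimate $\varepsilon^2\le 2n\delta/3<3\delta m(n-m)/n$ suffices, so no refined divergence bound is ever needed.

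If you want to avoid that citation, the missing observation is $\delta\le\gamma_n$, because $\per(A)\ge0$. This gives $\delta\le 3/32$ for $n\ge4$ and $\delta\le\gamma_5<0.04$ for $n\ge5$. With this, the expansion $nD(p,q_0)\ge\frac{3\delta}{2}-\frac{\varepsilon_0^3}{3k^2}=\delta\bigl(\frac32-\sqrt{3\delta}\,(1-p)^{3/2}k^{-1/2}\bigr)$ exceeds $L\le\delta/(1-\delta)$ for all $n\ge4$. The cases $n\le3$ can be checked directly. For $n=3$ one needs $(1+\varepsilon_0)(1-\varepsilon_0/2)^2=1-\frac34\varepsilon_0^2+\frac14\varepsilon_0^3<1-\frac12\varepsilon_0^2=1-\delta$, which holds since $\varepsilon_0<1$.
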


\begin{proof}
It is enough to prove the row estimate, because the column estimate follows by transposition. Let \(R=\prod_{i=1}^{n}r_i\) and \(C=\prod_{j=1}^{n}c_j\). Since \(\sum_{i=1}^{n}r_i=n\) and \(\sum_{j=1}^{n}c_j=n\), the AM-GM inequality gives \(R\le1\) and \(C\le1\). By assumption, \(\phi(A)\ge\phi(n^{-1}J_n)\). Thus
\[
 R+C-(\gamma_n-\delta)\ge 2-\gamma_n.
\]
Together with \(C\le1\), this gives
\begin{equation}\label{eq:R-lower}
 R\ge 1-\delta .
\end{equation}

The cases \(k=0\) and \(k=n\) are trivial. Assume \(1\le k\le n-1\), and fix a row index set \(\alpha\) with \(|\alpha|=k\). Let
\[
 \varepsilon=\sum_{i\in\alpha}r_i-k.
\]
We first prove
\begin{equation}\label{eq:aux-eps}
 \varepsilon^2\le \frac{2n\delta}{3}.
\end{equation}
If \(\varepsilon<0\), apply the following argument to the complement of \(\alpha\), whose deviation is \(-\varepsilon\). Hence we may assume \(\varepsilon\ge0\). Since \(R>0\), all row sums are positive and \(0\le \varepsilon<n-k\). By AM-GM applied to the rows in \(\alpha\),
\[
 \prod_{i\in\alpha}r_i\le \left(\frac{1}{k}\sum_{i\in\alpha}r_i\right)^k
 =\left(1+\frac{\varepsilon}{k}\right)^k.
\]
By AM-GM applied to the rows outside \(\alpha\),
\[
 \prod_{i\notin\alpha}r_i\le \left(\frac{1}{n-k}\sum_{i\notin\alpha}r_i\right)^{n-k}
 =\left(1-\frac{\varepsilon}{n-k}\right)^{n-k}.
\]
Multiplying the two inequalities gives
\[
 R\le \left(1+\frac{\varepsilon}{k}\right)^k\left(1-\frac{\varepsilon}{n-k}\right)^{n-k}.
\]
Let \(p=k/n\) and \(q=(k+\varepsilon)/n\). Taking logarithms gives
\[
 \log R\le -n\left(p\log\frac{p}{q}+(1-p)\log\frac{1-p}{1-q}\right).
\]
By Lemma \ref{lem:pinsker}, \(\log R\le -2\varepsilon^2/n\). Combining this with \eqref{eq:R-lower},
\[
 1-\delta\le R\le \exp\left(-\frac{2\varepsilon^2}{n}\right).
\]
Therefore
\[
 \varepsilon^2\le -\frac{n}{2}\log(1-\delta).
\]
Also,
\[
 -\log(1-\delta)=\int_{0}^{\delta}\frac{\mathrm{d}t}{1-t}
 \le \int_{0}^{\delta}\frac{\mathrm{d}t}{1-\delta}
 =\frac{\delta}{1-\delta}.
\]
Because \(\delta\le1/4\), we have \((1-\delta)^{-1}\le4/3\). Hence
\[
 \varepsilon^2\le -\frac{n}{2}\log(1-\delta)
 \le \frac{n\delta}{2(1-\delta)}
 \le\frac{2n\delta}{3}.
\]
This proves \eqref{eq:aux-eps}.

Let \(m=\min\{k,n-k\}\). By \citet[Theorem~2.1]{CheonWanless2012}, applied to \(\alpha\),
\[
 \left|\sum_{i\in\alpha}r_i-k\right|\le \sqrt{2\delta k}.
\]
Since \(\sum_{i=1}^{n}r_i=n\), we have
\[
 \sum_{i\in\alpha^c}r_i-(n-k)
 =n-\sum_{i\in\alpha}r_i-(n-k)
 =k-\sum_{i\in\alpha}r_i
 =-\varepsilon .
\]
Applying \citet[Theorem~2.1]{CheonWanless2012} to the complement \(\alpha^c\) gives
\[
 |\varepsilon|
 =\left|\sum_{i\in\alpha^c}r_i-(n-k)\right|
 \le \sqrt{2\delta(n-k)}.
\]
Together with the bound obtained from \(\alpha\), this gives
\begin{equation}\label{eq:CW-min}
 \varepsilon^2
 =\left|\sum_{i\in\alpha}r_i-k\right|^2
 \le 2\delta\min\{k,n-k\}
 =2\delta m .
\end{equation}
If \(m\le n/3\), then \(2\delta m\le 3\delta m(n-m)/n\). If \(m> n/3\), then \(m(n-m)>2n^2/9\), and \eqref{eq:aux-eps} gives \(\varepsilon^2\le 2n\delta/3<3\delta m(n-m)/n\). Since \(m(n-m)=k(n-k)\), the desired inequality follows.
\end{proof}

\begin{lemma}\label{lem:scaling}
Let \(A\in K_n\) satisfy \(\phi(A)\ge\phi(n^{-1}J_n)\), and let \(\delta=\gamma_n-\per(A)\). Assume \(\delta<\min\{1/4,1/(3n)\}\). Then \((1-\sqrt{3n\delta})^{-1}A\) is doubly superstochastic.
\end{lemma}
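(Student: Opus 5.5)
Let \(t=\sqrt{3n\delta}\), so that \(0\le t<1\) because \(\delta<1/(3n)\), and put \(M=(1-t)^{-1}A\). To show \(M\) is doubly superstochastic I will use the transportation-polytope version of the Frobenius--König/Hall criterion. A nonnegative matrix \(M\) dominates some \(B\in\Omega_n\) entrywise if and only if there is a flow with row supplies \(1\), column demands \(1\) and arc capacities \(m_{ij}\). By max-flow/min-cut (equivalently, the Gale--Ryser--Ford--Fulkerson feasibility condition), such a flow exists if and only if
\[
 \sum_{i\in\alpha}\sum_{j\in\beta^c} m_{ij}\ \ge\ |\alpha|-|\beta| \qquad\text{for all } \alpha,\beta\subseteq\{1,\ldots,n\}.
\]
Here \(\alpha\) is a set of rows and \(\beta\) a set of columns: the rows in \(\alpha\) must route their supply either into \(\beta\) (capacity at most \(|\beta|\)) or through the arcs into \(\beta^c\). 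So the goal reduces to checking this inequality for \(M\).

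Fix \(\alpha,\beta\) with \(|\alpha|=k\), \(|\beta|=l\), and assume \(k>l\), since otherwise the inequality is trivial. Write \(A[\alpha,\beta]\) for the sum of entries in the submatrix. Then
\[
 A[\alpha,\beta^c]=\sum_{i\in\alpha}r_i-A[\alpha,\beta]\ \ge\ \sum_{i\in\alpha}r_i-\sum_{j\in\beta}c_j .
\]
By Lemma \ref{lem:balanced}, \(\sum_{i\in\alpha}r_i\ge k-\sqrt{3\delta k(n-k)/n}\) and \(\sum_{j\in\beta}c_j\le l+\sqrt{3\delta l(n-l)/n}\). Hence
\[
 A[\alpha,\beta^c]\ \ge\ (k-l)-\sqrt{\tfrac{3\delta}{n}}\Bigl(\sqrt{k(n-k)}+\sqrt{l(n-l)}\Bigr).
\]
This estimate is wasteful, because it ignores the entries \(A[\alpha^c,\beta]\). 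The better route is to bound \(A[\alpha,\beta]\le\min\{\sum_{i\in\alpha}r_i,\sum_{j\in\beta}c_j\}\) and, symmetrically, to use \(A[\alpha,\beta^c]\ge\) (rows of \(\alpha\)) \(-\) (columns of \(\beta\)). The target is the bound \(A[\alpha,\beta^c]\ge(1-t)(k-l)\). Equivalently, I want
\[
 \sqrt{\tfrac{3\delta}{n}}\Bigl(\sqrt{k(n-k)}+\sqrt{l(n-l)}\Bigr)\le t(k-l)=\sqrt{3n\delta}\,(k-l),
\]
that is, \(\sqrt{k(n-k)}+\sqrt{l(n-l)}\le n(k-l)\).

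For \(k-l\ge1\) this holds, since each square root is at most \(n/2\), so the left side is at most \(n\le n(k-l)\). Dividing by \(1-t\) then gives \(M[\alpha,\beta^c]\ge k-l\), which is the required condition. The case \(l=0\) is covered as well, since the bound then reads \(\sqrt{k(n-k)}\le nk\).

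The main obstacle is making the Hall-type criterion rigorous in the form stated, with capacities \(m_{ij}\) and unit margins. I would cite it as the standard supply–demand theorem (Gale's theorem) or derive it directly from max-flow min-cut on the bipartite network \(s\to\)rows\(\to\)columns\(\to t\). In that network a cut is determined by a row set \(\alpha\) left on the source side and a column set \(\beta\) on the source side, and its capacity is \((n-k)+M[\alpha,\beta^c]+l\). Requiring every cut to have capacity at least \(n\) gives exactly the displayed condition. The resulting integral-free flow \(B\) is doubly stochastic and satisfies \(B\le M\). The hypothesis \(\delta\le 1/4\) is needed only so that Lemma \ref{lem:balanced} applies.
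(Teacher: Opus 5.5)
Your proof is correct and is essentially the paper's argument. With \(I=\alpha\) and \(J=\beta^c\), your cut condition \(M[\alpha,\beta^c]\ge|\alpha|-|\beta|\) is exactly the criterion \(s(S[I,J])\ge|I|+|J|-n\) from Cheon--Wanless Lemma~2.2 that the paper cites. Your decomposition (dropping \(A[\alpha^c,\beta]\ge0\)), your use of Lemma~\ref{lem:balanced}, and the bound \(\sqrt{k(n-k)}+\sqrt{l(n-l)}\le n\le n(k-l)\) also match the paper's steps; the paper applies the lemma to the complements, which gives the same quantities because \(k(n-k)\) is symmetric. The only real difference is that you derive the criterion yourself from max-flow/min-cut instead of citing it, and that derivation is valid. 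Your aside calling the estimate ``wasteful'' is unnecessary, since that same estimate is what you then use and it already suffices.
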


\begin{proof}
Let \(t=\sqrt{3n\delta}\), and suppose that \(S=(1-t)^{-1}A\) is not doubly superstochastic. By \citet[Lemma~2.2]{CheonWanless2012}, there exist row and column sets \(I,J\subseteq\{1,\ldots,n\}\) such that, with \(u=|I|\), \(v=|J|\), and \(p=u+v-n\),
\[
 s(S[I,J])<p.
\]
Since \(S=(1-t)^{-1}A\), this is equivalent to
\begin{equation}\label{eq:not-super-new}
 s(A[I,J])<(1-t)p.
\end{equation}
Because \(s(A[I,J])\ge0\) and \(1-t>0\), this inequality forces \(p>0\). As \(p\) is an integer, \(p\ge1\). Here \(s(X)\) denotes the sum of the entries of \(X\).

Let \(a=n-u=|I^c|\) and \(b=n-v=|J^c|\). Then \(p=n-a-b\). By Lemma \ref{lem:balanced}, applied to \(I^c\),
\[
 \sum_{i\in I^c}r_i-a
 \le \left|\sum_{i\in I^c}r_i-a\right|
 \le \sqrt{\frac{3\delta a(n-a)}{n}}.
\]
Similarly,
\[
 \sum_{j\in J^c}c_j-b
 \le \left|\sum_{j\in J^c}c_j-b\right|
 \le \sqrt{\frac{3\delta b(n-b)}{n}}.
\]
Because all entries are nonnegative,
\begin{align}
 s(A[I,J])&=n-\sum_{i\in I^c}r_i-\sum_{j\in J^c}c_j+s(A[I^c,J^c]) \notag\\
 &\ge p-\sqrt{\frac{3\delta}{n}}\left(\sqrt{a(n-a)}+\sqrt{b(n-b)}\right).\label{eq:submatrix-lower}
\end{align}
For \(0\le a,b\le n\), we have \(\sqrt{a(n-a)}\le n/2\) and \(\sqrt{b(n-b)}\le n/2\). Hence \eqref{eq:submatrix-lower} gives \(s(A[I,J])\ge p-t\). Since \(p\ge1\) and \(0<t<1\), we have \(p-t\ge(1-t)p\). This contradicts \eqref{eq:not-super-new}. Therefore \(S\) is doubly superstochastic.
\end{proof}

\section{Main result}

\begin{lemma}\label{lem:estimate}
For every integer \(n\ge17\),
\begin{equation}\label{eq:gap-new}
 m_n-\gamma_n>\frac34 n^3m_n^2.
\end{equation}
\end{lemma}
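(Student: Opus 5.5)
Divide \eqref{eq:gap-new} by \(m_n>0\). The claim becomes
\[
 1-\frac{\gamma_n}{m_n} > \frac34 n^3 m_n .
\]
The left side is a relative gap that should decay only polynomially in \(n\). The right side decays like \(n^{7/2}e^{-n}\). So the plan is to bound the left side from below by an explicit rational function of \(n\), bound \(m_n\) from above by an explicit Stirling-type expression, and compare the two.

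\emph{Step 1: the ratio.} Substituting the definitions and cancelling factorials gives
\[
 \frac{\gamma_n}{m_n}=\frac{(n-1)^{2n-3}}{n^{n-1}(n-2)^{n-2}} .
\]
With \(x=n-1\), its logarithm is
\[
 L(x)=-x\log\Bigl(1+\frac1x\Bigr)-(x-1)\log\Bigl(1-\frac1x\Bigr).
\]
I would expand both logarithms in powers of \(1/x\). The terms of order \(x^0\) and \(x^{-1}\) cancel, leaving \(L(x)=-c/x^2+O(x^{-3})\) for an explicit constant \(c>0\). I would make this rigorous by truncating the series with explicit remainder bounds, using alternating-series bounds for \(\log(1+1/x)\) and geometric tail bounds for \(-\log(1-1/x)\). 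The aim is an inequality \(L(x)\le -c_1/x^2\) valid for all \(x\ge16\), with an explicit \(c_1>0\). Then \(1-\gamma_n/m_n\ge 1-e^{-c_1/(n-1)^2}\ge c_1/(n-1)^2-c_1^2/(2(n-1)^4)\).

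\emph{Step 2: bounding \(m_n\).} The Robbins form of Stirling's formula gives \((n-2)!\le\sqrt{2\pi(n-2)}\,((n-2)/e)^{n-2}e^{1/(12(n-2))}\). Hence
\[
 m_n\le \sqrt{2\pi(n-2)}\,e^{-(n-2)+1/(12(n-2))}\Bigl(1-\frac1{n-1}\Bigr)^{2(n-2)},
\]
and the last factor is at most \(e^{-2(n-2)/(n-1)}\). This yields \(\frac34 n^3m_n\le C\,n^{7/2}e^{-n}\) with an explicit \(C\). It then suffices to show \(C n^{7/2}(n-1)^2e^{-n}<c_1-o(1)\). The left side of this inequality is decreasing in \(n\) for \(n\ge 17\), since its logarithmic derivative is \(\frac{7}{2n}+\frac{2}{n-1}-1<0\). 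So the whole inequality reduces to checking the single case \(n=17\).

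\emph{Main obstacle.} The margin at \(n=17\) is small. A rough estimate gives \(\frac34 n^3 m_n\approx 10^{-3}\), while the gap \(1-\gamma_n/m_n\) is of order \(1/n^2\) times a modest constant, so only a small factor separates them. The crude constants from the steps above may therefore fail near \(n=17\). I would handle this in two parts:
\begin{itemize}
 \item For the finitely many small values, say \(17\le n\le N_0\) with \(N_0\) around \(30\), evaluate \(\gamma_n\) and \(m_n\) exactly as rationals (both are explicit products) and verify \eqref{eq:gap-new} directly.
 \item For \(n>N_0\), use the asymptotic argument above, where the exponential factor dominates and there is ample room.
\end{itemize}
If the direct verification is to be avoided, I would instead sharpen \(c_1\) by keeping the \(x^{-3}\) term of \(L\) and use the exact value \(e^{-2(n-2)}\bigl(1+\tfrac{1}{n-2}\bigr)^{\ldots}\) in place of the cruder exponential bound.
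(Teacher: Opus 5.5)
Your proposal is correct and is essentially the paper's argument. Your inequality $1-\gamma_n/m_n>\frac34 n^3m_n$ is the paper's $(q_n-1)/q_n^2>\frac34 n^3\gamma_n$ rearranged. Both proofs then follow the same three steps: bound $\log q_n$ from below by its series in $1/(n-1)$, bound an exponentially small factor from above, and close with monotonicity plus a numerical check. The truncation you describe (alternating bound for $\log(1+1/x)$, geometric tail for $-\log(1-1/x)$) yields exactly the paper's $\frac{1}{2(n-1)^2}-\frac{1}{6(n-1)^3}$.

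The differences are minor. You control $m_n$ by Robbins--Stirling, whereas the paper controls $\gamma_n$ by the elementary ratio $\gamma_{n+1}/\gamma_n=(n/(n+1))^n$ and Bernoulli's inequality. You also keep $c_1=\frac{47}{96}$ where the paper weakens to $\frac13$. That extra room means your crude bound already closes at $n=17$: with $C=\frac34\sqrt{2\pi}\,e^{1/8+1/180}$ one gets $C\,17^{7/2}\,16^2e^{-17}\approx0.460<0.489\approx c_1-c_1^2/512$. So the obstacle you anticipate does not arise, and your fallback of direct checks for $17\le n\le N_0$ is unnecessary. The paper, by contrast, must treat $n=17$ by a separate direct computation.
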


\begin{proof}
Let \(q_n=m_n/\gamma_n\). From the definitions of \(m_n\) and \(\gamma_n\),
\begin{equation}\label{eq:qn-new}
 q_n=\frac{n^{n-1}(n-2)^{n-2}}{(n-1)^{2n-3}}=\left(1-\frac1{(n-1)^2}\right)^{n-1}\frac{n-1}{n-2}.
\end{equation}
The desired inequality is equivalent to
\begin{equation}\label{eq:q-equiv-new}
 \frac{q_n-1}{q_n^2}>\frac34 n^3\gamma_n .
\end{equation}

We first check \(n=17\). A direct calculation gives
\[
 \frac{q_{17}-1}{q_{17}^2}>\frac34\,17^3\gamma_{17}.
\]
Hence \eqref{eq:q-equiv-new} holds for \(n=17\).

Now we prove \eqref{eq:q-equiv-new} for \(n\ge18\). Let \(x=1/(n-1)\). Then \(0<x\le1/17\). Taking logarithms in \eqref{eq:qn-new},
\begin{align}
 \log q_n
 &=\frac1x\log(1-x^2)-\log(1-x) \notag\\
 &=-\sum_{r=1}^{\infty}\frac{x^{2r-1}}{r}+\sum_{r=1}^{\infty}\frac{x^r}{r} \notag\\
 &=\sum_{j=1}^{\infty}\frac{x^{2j}}{2j}+\sum_{j=1}^{\infty}x^{2j+1}\left(\frac1{2j+1}-\frac1{j+1}\right) \notag\\
 &=\sum_{j=1}^{\infty}x^{2j}\left(\frac1{2j}-\frac{jx}{(2j+1)(j+1)}\right).\label{eq:logseries-new}
\end{align}
Since for all \(j\ge1\),
\[
 \frac{jx}{(2j+1)(j+1)}
 <\frac{j}{(2j+1)(j+1)}
 <\frac1{2j},
\]
every summand in \eqref{eq:logseries-new} is positive. Keeping the first summand gives
\[
 \log q_n>x^2\left(\frac12-\frac{x}{6}\right)>\frac{x^2}{3}.
\]
Since \(e^y-1>y\) for \(y>0\), applied with \(y=\log q_n\),
\begin{equation}\label{eq:qminus-new}
 q_n-1>\log q_n>\frac{x^2}{3}=\frac1{3(n-1)^2}.
\end{equation}
Also \eqref{eq:qn-new} gives \(q_n<(n-1)/(n-2)\le17/16\). Hence
\begin{equation}\label{eq:qq-new}
 \frac{q_n-1}{q_n^2}>\frac{1}{3(n-1)^2}\left(\frac{16}{17}\right)^2.
\end{equation}
It is enough to prove
\begin{equation}\label{eq:gamma-new}
 \gamma_n<\frac{4}{9n^3(n-1)^2}\left(\frac{16}{17}\right)^2.
\end{equation}
Let
\[
 T_n=\frac94 n^3(n-1)^2\gamma_n\left(\frac{17}{16}\right)^2.
\]
Then \eqref{eq:gamma-new} is equivalent to \(T_n<1\). Since \(\gamma_{n+1}/\gamma_n=(n/(n+1))^n\),
\[
 \frac{T_n}{T_{n+1}}=\left(\frac{n-1}{n}\right)^2\left(1+\frac1n\right)^{n-3}.
\]
By Bernoulli's inequality, \((1+1/n)^{n-3}>1+(n-3)/n=(2n-3)/n\). For \(n\ge18\), \((2n-3)(n-1)^2>n^3\). Therefore \(T_n/T_{n+1}>1\), so \(T_n\) is decreasing for \(n\ge18\). A direct calculation gives \(T_{18}<1\). Thus \(T_n<1\) for every \(n\ge18\). This proves \eqref{eq:gamma-new}. Combining \eqref{eq:gamma-new} and \eqref{eq:qq-new} gives \eqref{eq:q-equiv-new}. Hence \eqref{eq:gap-new} holds for every \(n\ge17\).
\end{proof}

\begin{remark}\label{rem:threshold}
The threshold in Lemma \ref{lem:estimate} is sharp for this comparison. If \(\Lambda_n=(m_n-\gamma_n)/(\frac34 n^3m_n^2)\), then \(\Lambda_{16}\approx0.62<1\), while \(\Lambda_{17}\approx1.21>1\).
\end{remark}

\begin{theorem}\label{thm:main}
Let \(n\ge17\). Then \(\phi(A)\le\phi(n^{-1}J_n)\) for every \(A\in K_n\). Equality holds if and only if \(A=n^{-1}J_n\).
\end{theorem}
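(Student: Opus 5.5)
We argue by compactness and show that every maximizer of \(\phi\) on \(K_n\) equals \(n^{-1}J_n\). Since \(K_n\) is compact and \(\phi\) is continuous, a maximizer \(A\) exists, and \(\phi(A)\ge\phi(n^{-1}J_n)=2-\gamma_n\). Put \(\delta=\gamma_n-\per(A)\). By the AM--GM step in the proof of Lemma \ref{lem:balanced}, \(R,C\le 1\), so \(\phi(A)\ge 2-\gamma_n\) forces \(\per(A)\ge R+C-2+\gamma_n\). We then bound \(\delta\) from above by a quantity small enough to activate Lemma \ref{lem:scaling}.

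The first step is an a priori bound. Any matrix in \(K_n\) satisfies \(\per(A)\le\prod_i r_i=R\le 1\). The more useful inequality is \(\per(A)\le\gamma_n\), that is, \(\delta\ge0\), at any point with \(\phi(A)\ge 2-\gamma_n\). This follows from \(R+C-\per(A)\ge 2-\gamma_n\) together with \(R+C\le 2\). The remaining task is to show \(\delta\) is small.

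Here I intend to run a self-improving argument.
\begin{itemize}
\item Suppose \(\delta<\min\{1/4,1/(3n)\}\). Then Lemma \ref{lem:scaling} shows that \((1-\sqrt{3n\delta})^{-1}A\) dominates some \(B\in\Omega_n\).
\item By monotonicity of the permanent in nonnegative entries, \(\per(A)\ge(1-\sqrt{3n\delta})^n\per(B)\).
\item If \(A\) has a zero entry and \(B\) is taken below the scaled \(A\), then \(B\) also has that zero entry. Lemma \ref{lem:KS} then gives \(\per(B)\ge m_n\), hence \(\gamma_n-\delta\ge (1-\sqrt{3n\delta})^n m_n\ge (1-n\sqrt{3n\delta})m_n\).
\item Rearranging, \(n m_n\sqrt{3n\delta}\ge m_n-\gamma_n+\delta> m_n-\gamma_n\). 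Squaring gives \(3n^3m_n^2\delta> (m_n-\gamma_n)^2\).
\end{itemize}
I would combine this with an upper bound on \(\delta\) to contradict Lemma \ref{lem:estimate}. The natural route is to use \(\delta\le\gamma_n\), or the estimate obtained from \(\per(A)\ge 0\), namely \(\delta\le\gamma_n\). This gives \(3n^3m_n^2\gamma_n>(m_n-\gamma_n)^2\). Lemma \ref{lem:estimate} says \(m_n-\gamma_n>\tfrac34 n^3m_n^2\), which would give \((m_n-\gamma_n)^2>\tfrac34n^3m_n^2(m_n-\gamma_n)\). The contradiction therefore needs \(4\gamma_n\le m_n-\gamma_n\), a mild condition to verify for \(n\ge17\). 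The precise bookkeeping is the main obstacle: I must match the exact form \(\tfrac34n^3m_n^2\), which suggests that the paper instead tests with \(\delta\) compared to \(m_n-\gamma_n\) directly. I would first try squaring \(\delta\le\gamma_n< m_n\) into the inequality and adjusting constants. I also need to check that the hypotheses \(\delta<1/4\) and \(\delta<1/(3n)\) hold, which follows from \(\delta\le\gamma_n\) being tiny for \(n\ge17\).

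If that chain closes, the maximizer \(A\) has no zero entries, so it is positive. Hwang's characterization \citep{Hwang1986} then says the only positive maximizer is \(n^{-1}J_n\). Thus the maximum value is \(\phi(n^{-1}J_n)\), and any \(A\) attaining equality is a maximizer, hence equals \(n^{-1}J_n\). That gives both the inequality and the equality case of Theorem \ref{thm:main}.
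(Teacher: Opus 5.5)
Your setup matches the paper's proof: compactness, Hwang's theorem for a positive maximizer, \(0\le\delta\le\gamma_n<\min\{1/4,1/(3n)\}\), Lemma~\ref{lem:scaling}, the zero entry passing to \(B\), Lemma~\ref{lem:KS}, and Bernoulli. The gap is in the closing step.

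\textbf{Where the argument fails.} From \(nm_n\sqrt{3n\delta}\ge m_n-\gamma_n+\delta\) you discard \(\delta\) on the right. The resulting inequality \(3n^3m_n^2\delta>(m_n-\gamma_n)^2\) can only be contradicted by an a priori bound \(\delta\le (m_n-\gamma_n)^2/(3n^3m_n^2)\). Using \(\delta\le\gamma_n\), you correctly reduce the contradiction to \(4\gamma_n\le m_n-\gamma_n\), i.e.\ \(m_n\ge 5\gamma_n\). This condition is not mild; it is false for every \(n\). By \eqref{eq:qn-new}, \(m_n/\gamma_n=q_n<(n-1)/(n-2)\le 16/15\) for \(n\ge17\). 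In fact \(q_n-1\approx 1/(2(n-1)^2)\), so \(q_{17}\approx1.0019\).

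Adjusting constants cannot rescue this route. The required bound \((q_n-1)^2/(3n^3q_n^2)\approx 1/(12n^3(n-1)^4)\) is of order \(n^{-7}\). At \(n=17\) it is roughly \(2.5\cdot10^{-10}\), against \(\gamma_{17}\approx4.3\cdot10^{-7}\). Nothing available forces \(\per(A)\) that close to \(\gamma_n\).

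\textbf{The missing idea: keep \(\delta\).} The inequality \(m_n-\gamma_n+\delta\le nm_n\sqrt{3n\delta}\) is a quadratic inequality in \(\sqrt{\delta}\). It has no solution as soon as its discriminant \(3n^3m_n^2-4(m_n-\gamma_n)\) is negative, and that is exactly Lemma~\ref{lem:estimate}. Equivalently, for \(\delta>0\), AM--GM and Lemma~\ref{lem:estimate} give
\[
 m_n-\gamma_n+\delta\ \ge\ 2\sqrt{(m_n-\gamma_n)\delta}\ >\ 2\sqrt{\tfrac34 n^3m_n^2\,\delta}\ =\ nm_n\sqrt{3n\delta}.
\]
For \(\delta=0\) the inequality reads \(m_n\le\gamma_n\), which is also impossible. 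This is where the constant \(\tfrac34\) comes from. The paper phrases it as completing the square: \(\delta-nm_n\sqrt{3n\delta}\ge-\tfrac34 n^3m_n^2\).

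Large \(\delta\) is absorbed by the \(\delta\) term itself. So no upper bound on \(\delta\) is needed beyond the one that lets you invoke Lemma~\ref{lem:scaling}. With this replacement, the rest of your proposal goes through as in the paper, including the final appeal to Hwang for both the inequality and the equality case.
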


\begin{proof}
Because \(K_n\) is compact and \(\phi\) is continuous, there exists \(A\in K_n\) such that \(\phi(A)=\max_{X\in K_n}\phi(X)\).
If \(A\) is positive, then Hwang's positive-support result \citep{Hwang1986} gives \(A=n^{-1}J_n\).
We now assume that \(A\) has a zero entry. Since \(A\) maximizes \(\phi\), we have \(\phi(A)\ge\phi(n^{-1}J_n)=2-\gamma_n\). Let \(\delta=\gamma_n-\per(A)\). By \citet[Theorem~2.1]{CheonWanless2012}, \(0\le\delta\le\gamma_n\).

We first show that \(\delta>0\). If \(\delta=0\), then \(\per(A)=\gamma_n\). By the AM-GM inequality, \(\prod_{i=1}^{n}r_i\le1\) and \(\prod_{j=1}^{n}c_j\le1\). Since \(\phi(A)\ge2-\gamma_n\), equality must hold in both inequalities. Hence all row sums and all column sums of \(A\) are equal to \(1\), so \(A\in\Omega_n\). By \citet[Theorem~1.1]{CheonWanless2012}, \(A=n^{-1}J_n\), contradicting the fact that \(A\) has a zero entry. Thus \(\delta>0\).

For \(n\ge17\), we have \(\delta\le\gamma_n<\min\{1/4,1/(3n)\}\). This follows from \(\gamma_{17}<1/2320000\) and from the facts that \(\gamma_n\) and \(n\gamma_n\) are decreasing in \(n\). Therefore Lemma \ref{lem:scaling} applies. Let \(t=\sqrt{3n\delta}\) and \(S=(1-t)^{-1}A\). Then \(S\) is doubly superstochastic, so there exists \(B\in\Omega_n\) such that \(0\le B\le S\) entrywise.

The matrix \(S\) is a positive scalar multiple of \(A\), so \(S\) has a zero entry. At the same position, \(0\le B\le S\) forces \(B\) to have a zero entry. By Lemma \ref{lem:KS}, \(\per(B)\ge m_n\). Because the permanent is monotone on nonnegative matrices, \(\per(S)\ge\per(B)\ge m_n\). Since \(S=(1-t)^{-1}A\), we obtain
\begin{equation}\label{eq:firstineq-new}
 \gamma_n-\delta=\per(A)=(1-t)^n\per(S)\ge (1-t)^n m_n .
\end{equation}

Now we prove that \eqref{eq:firstineq-new} is impossible. Bernoulli's inequality gives \((1-t)^n\ge1-nt\). Hence
\[
\begin{aligned}
 (1-t)^n m_n-(\gamma_n-\delta)
 &\ge m_n-\gamma_n+\delta-nm_n\sqrt{3n\delta} \\
 &=m_n-\gamma_n+\left(\sqrt{\delta}-\frac12 nm_n\sqrt{3n}\right)^2-\frac34 n^3m_n^2 \\
 &\ge m_n-\gamma_n-\frac34 n^3m_n^2.
\end{aligned}
\]
By Lemma \ref{lem:estimate}, the last expression is positive. Hence \((1-t)^n m_n>\gamma_n-\delta\), contradicting \eqref{eq:firstineq-new}. Therefore a maximizing matrix cannot have a zero entry.

Every maximizing matrix is positive. Hence Hwang's positive-support result \citep{Hwang1986} implies that every maximizing matrix is \(n^{-1}J_n\). This proves both the inequality and the equality statement.
\end{proof}

\begin{corollary}\label{cor:known}
Dittert's conjecture is true for \(n=2,3\), and for every \(n\ge17\).
\end{corollary}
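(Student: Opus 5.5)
The corollary combines two independent sources, so the plan is to split by dimension. For \(n\ge17\), the statement is exactly Theorem \ref{thm:main}: \(\phi(A)\le\phi(n^{-1}J_n)=2-\gamma_n=2-n!/n^n\) for every \(A\in K_n\), with equality only at \(A=n^{-1}J_n\). This is precisely the form of Dittert's conjecture recorded in the Preliminary section. So for \(n\ge17\) I only need to check that the theorem's inequality and equality clause match the conjecture's two assertions, which is immediate.

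For \(n=2\) and \(n=3\) I will cite the literature already mentioned in the introduction rather than re-prove anything. Sinkhorn \citep{Sinkhorn1984} settles \(n=2\), and Hwang \citep{Hwang1987} settles \(n=3\). In each case I should confirm that the cited statement includes uniqueness of the maximizer \(n^{-1}J_n\), since the conjecture as stated in this paper includes the equality clause. If a cited source gives only the inequality, I would close the gap in two steps. First, a maximizer exists by compactness. Second, the methods of \citet{CheonWanless2012} exclude partly decomposable maximizers and single-block zero patterns; for \(n=2,3\) this covers every possible zero pattern once the remaining small cases are handled. Hwang's positive-support result \citep{Hwang1986} then gives uniqueness.

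The only real obstacle is this bookkeeping: making sure the small-dimension citations cover the equality case. The case \(n=2\) can also be checked by hand. There
\[
\phi(A)=(a+b)(c+d)+(a+c)(b+d)-(ad+bc) \quad\text{with } a+b+c+d=2,
\]
and a direct quadratic optimization shows the maximum \(3/2\) is attained only at \(a=b=c=d=1/2\). For \(n=3\) I would rely on Hwang's paper. Combining the three pieces gives the corollary. The range \(4\le n\le16\) is deliberately excluded, consistent with Remark \ref{rem:threshold}.
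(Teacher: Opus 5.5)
Your proposal matches the paper's proof, which likewise cites Sinkhorn for \(n=2\), Hwang for \(n=3\), and Theorem~\ref{thm:main} for \(n\ge17\). Your hand check for \(n=2\) is a correct bonus, since \(\phi(A)\) is the second elementary symmetric function of the four entries, \(\frac12\bigl(4-\sum x_i^2\bigr)\le\frac32\).

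One caveat about your fallback, which you do not need: the Cheon--Wanless exclusions alone do not cover all \(3\times3\) zero patterns. For example, zeros at \((1,1)\) and \((2,2)\) give a fully indecomposable matrix whose zeros do not form a single block, so for \(n=3\) you genuinely rely on Hwang's paper, exactly as the paper does.
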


\begin{proof}
The case \(n=2\) is due to Sinkhorn \citep{Sinkhorn1984}, and the case \(n=3\) is due to Hwang \citep{Hwang1987}. The cases \(n\ge17\) follow from Theorem \ref{thm:main}.
\end{proof}

\begin{remark}
The cases \(4\le n\le16\) remain an open question.
\end{remark}

\end{document}